\theoremstyle{plain}
\newtheorem{atheorem}{Theorem}
\theoremstyle{definition}
\numberwithin{equation}{section}
\begin{document}

\title []{Pluripolarity of graphs of quasianalytic functions in the sense of Gonchar}

\author []{Sevdiyor Imomkulov and Zafar Ibragimov}


\urladdr{}

\date{}

\thanks{}



\maketitle


Let $f$  be a continuous real valued function, defined on a segment $\Delta=[a,b]$  of the real axis $\mathbb R$ and let $\rho_n(f)$ be the deviation from the best approximation of $f$  on $\Delta$ by rational functions $r_n(x)$ of degree less than or equal to $n$. That is, 
$$
\rho_n(f)=\inf_{r_n}\max_{x\in\Delta}|f(x)-r_n(x)|,
$$ 
where the infimum is taken over all rational functions of the form
$$
r_n(x)=\frac {a_0x^n+a_1x^{n-1}+\dots +a_n}{b_0x^n+b_1x^{n-1}+\dots +b_n}.
$$
As usual, we denote by $e_n(f)$ the least deviation of $f$ on$\Delta$ from its polynomial approximation of degree less than or equal 
to $n$. Obvious, $\rho_n(f)\leq e_n(f)$ for every $n=0,1,2,\dots$. In papers (\cite{G1},\cite{G2}) Gonchar proved that the class of functions 
$$
R(\Delta)=\{f\in C(\Delta)\colon\lim_{\overline{n\to\infty}}\sqrt[n]{\rho_n(f)}<1\}
$$
possesses one of the important properties of the class of analytic functions. Namely, if 
$$
\lim_{\overline{n\to\infty}}\sqrt[n]{\rho_n(f)}<1
$$
and $f(x)=0$ on a set $E\subset\Delta$ of positive logarithmic capacity, then $f(x)\equiv 0$ on $\Delta$.

By analogy with the class 
$$
B(\Delta)=\{f\in C(\Delta)\colon\lim_{\overline{n\to\infty}}\sqrt[n]{e_n(f)}<1\},
$$
which is called the class of quasianalytic functions of Bernstein (see \cite{CLP}), we call $R(\Delta)$ the class of quasianalytic functions of Gonchar.

It is known that functions that are analytic on $\Delta$ are characterized by condition 
$$
{\overline\lim}_{n\to\infty}\sqrt[n]{e_n(f)}<1,
$$
according to Bernstein's theorem. It follows that the class $A(\Delta)$ of analytic functions on $\Delta$  is a subclass of $B(\Delta)\subset R(\Delta)$, i.e., $A(\Delta)\subset B(\Delta)\subset R(\Delta)$. It is clear that $A(\Delta)\neq B(\Delta)$. It was shown in (\cite{G1}) that there exist functions for which the rate of approximation by polynomials $e_n(f)$ tends to zero as slow as possible whereas $\rho_n(f)$ tends to zero as fast as possible. In particular, it follows that    
$$
B(\Delta)\neq R(\Delta).
$$

It is not hard to see that if $f\in A(\Delta)$, then its graph 
$$
\Gamma_f=\{(x,f(x))\in \mathbb C^2\colon x\in\Delta\}
$$
is a pluripolar set in $\mathbb C^2$. In (\cite{DF}) the authors constructed examples of smooth (infinitely differentiable) functions whose grath are not pluripolar in $\mathbb C^2$. In their recent paper (\cite{CLP}) the authors have proved that if $f\in B(\Delta)$, then its gaph $\Gamma_f$ is pluripolar in $\mathbb C^2$.

In this paper we prove following more general theorem.

\begin{atheorem} If $f\in R(\Delta)$, then its graph $\Gamma_f$ is pluripolar in $\mathbb C^2$.
\end{atheorem}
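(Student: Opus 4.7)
I will construct a plurisubharmonic function $u\not\equiv-\infty$ on $\mathbb C^2$ with $u\equiv-\infty$ on $\Gamma_f$, which certifies $\Gamma_f$ as pluripolar. Let $\theta:=\limsup_n\rho_n(f)^{1/n}<1$, and for each $n$ pick a near-best rational approximation $r_n=P_n/Q_n$ of degree $\leq n$, normalized so that $\max(\|P_n\|_\Delta,\|Q_n\|_\Delta)=1$. The identity $P_n(x)-f(x)Q_n(x)=Q_n(x)(r_n(x)-f(x))$ motivates the bivariate polynomial
$$H_n(x,y):=P_n(x)-y\,Q_n(x),\qquad \deg H_n\leq n+1,$$
which satisfies $\|H_n\|_{\Gamma_f}\leq\rho_n$. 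Separate Bernstein--Walsh bounds on $P_n$ and $Q_n$ give the global estimate $|H_n(x,y)|\leq(1+|y|)e^{n g_\Delta(x)}$ on $\mathbb C^2$, where $g_\Delta$ is the Green function of $\mathbb C\setminus\Delta$ with pole at infinity.

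\textbf{Plurisubharmonic functions.} Set $u_n(x,y):=\frac{1}{n+1}\log|H_n(x,y)|$. These are plurisubharmonic, with the locally uniform upper bound $u_n\leq g_\Delta(x)+o(1)$, and on the graph
$$u_n(x,f(x))\leq\frac{\log\rho_n}{n+1}\longrightarrow\log\theta<0.$$
On the compact $K_0:=\Delta\times\overline{D(0,2)}$, a two-case analysis (according to whether $\|P_n\|_\Delta=1$, evaluating $H_n$ at $y=0$, or $\|Q_n\|_\Delta=1$, evaluating at $|y|=2$ and a point where $|Q_n|=1$) gives $1\leq\|H_n\|_{K_0}\leq 3$. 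Hence $M_n:=\sup_{K_0}u_n$ satisfies $0\leq M_n\leq\frac{\log 3}{n+1}\to 0$.

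\textbf{Series construction.} Extract a subsequence $n_k$ with $\rho_{n_k}^{1/n_k}\to\theta$ and define
$$u(x,y):=\sum_{k\geq 1}\bigl(u_{n_k}(x,y)-M_{n_k}\bigr).$$
Each summand is plurisubharmonic and nonpositive on $K_0$, so partial sums decrease to $u$. On $\Gamma_f\cap K_0$, each summand is at most $\frac{\log\rho_{n_k}}{n_k+1}-0\leq\frac{1}{2}\log\theta<0$ for $k$ large, so $u\equiv-\infty$ on $\Gamma_f$. Provided $u\not\equiv-\infty$, the function $u$ is plurisubharmonic on $K_0$ with $\Gamma_f\subset\{u=-\infty\}$, giving local pluripolarity, and Josefson's theorem globalizes this to pluripolarity in $\mathbb C^2$.

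\textbf{Main obstacle.} The delicate step is securing $u\not\equiv-\infty$. One needs a point $(x_0,y_0)\in K_0$ at which $\sum_k\bigl(M_{n_k}-u_{n_k}(x_0,y_0)\bigr)<\infty$. Choosing $y_0\in\overline{D(0,2)}\setminus f(\Delta)$ (say $y_0=i$) gives $|r_n(x)-y_0|\geq\tfrac12\,\mathrm{dist}(y_0,f(\Delta))$ uniformly in $x\in\Delta$ for $n$ large, so the task reduces to finding $x_0\in\Delta$ at which $|Q_{n_k}(x_0)|^{1/n_k}\to 1$ along the subsequence (in the subcase $\|Q_n\|_\Delta=1$; the other subcase treats $P_n$ symmetrically with $y_0=0$). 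Let $x_n^*\in\Delta$ attain $|Q_n(x_n^*)|=1$; by compactness pass to $x_{n_k}^*\to x^*\in\Delta$. Bernstein's inequality yields $|Q_{n_k}(x^*)|\geq 1-C n_k|x^*-x_{n_k}^*|$, so a further sparsening of $\{n_k\}$ that forces $n_k|x^*-x_{n_k}^*|\to 0$ (feasible because we may simultaneously pick $n_k$ arbitrarily large along the convergent subsequence) secures $|Q_{n_k}(x^*)|\geq 1/2$ and thus $u_{n_k}(x^*,y_0)=O(1/n_k)$, delivering the required summability at $(x^*,y_0)$. Controlling the (possibly erratic) denominators $Q_n$ in this manner is the essential new ingredient beyond the polynomial/Bernstein case of \cite{CLP}.
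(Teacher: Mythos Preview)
Your construction of $H_n(x,y)=P_n(x)-yQ_n(x)$ and of the functions $u_n=\tfrac{1}{n+1}\log|H_n|$ matches the paper's setup, and you correctly isolate the control of the denominators $Q_{n_k}$ on $\Delta$ as the heart of the matter. The gap is in the ``Main obstacle'' paragraph: the sparsening step does not do what you claim. Once the subsequence $\{n_k\}$ is fixed by the condition $\rho_{n_k}^{1/n_k}\to\theta$, the maximizing points $x_{n_k}^*\in\Delta$ are determined by the polynomials $Q_{n_k}$, and you have no control whatsoever over the speed at which $x_{n_k}^*\to x^*$. Passing to a sparser sub-subsequence only \emph{enlarges} the factor $n_{k_j}$ in the product $n_{k_j}\,|x^*-x_{n_{k_j}}^*|$; it cannot force that product to $0$. (For instance, if $n_k=2^k$ and $|x^*-x_{n_k}^*|\asymp 1/k$, every sub-subsequence still gives $n_{k_j}|x^*-x_{n_{k_j}}^*|\to\infty$; near an endpoint of $\Delta$ one would even need $n_k^2|x^*-x_{n_k}^*|\to 0$ via Markov rather than Bernstein.) More fundamentally, your series construction requires $\sum_k\tfrac{1}{n_k}\log^-|Q_{n_k}(x_0)|<\infty$ at a \emph{single} point $x_0\in\Delta$, which is strictly stronger than $\limsup_k|Q_{n_k}(x_0)|^{1/n_k}>0$ and is not obtainable by a soft compactness argument; the poles of best rational approximants can cluster arbitrarily close to $\Delta$, so no uniform lower bound on $|Q_{n_k}(x_0)|$ is available in general.

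The paper sidesteps this by \emph{not} seeking one good point. Instead of a series it takes $u=\limsup_k u_k$ and its upper regularization $u^\star$; the negligible set $E=\{u<u^\star\}$ is automatically pluripolar. A direct computation shows that for every $z\in\Delta$ with $\limsup_k|q_{n_k}(z)|^{1/n_k}>0$ one has $u(z,f(z))\le\ln\alpha+\limsup_k\tfrac{1}{n_k}\ln|q_{n_k}(z)|<u^\star(z,f(z))$, so $(z,f(z))\in E$. It therefore suffices to prove that $A=\{z\in\Delta:\limsup_k|q_{n_k}(z)|^{1/n_k}=0\}$ is polar in $\mathbb C$. This is the substantive lemma: assuming $\operatorname{cap}(A)>0$, one extracts a compact $A_0\subset A$ of positive capacity on which $|q_{n_k}|^{1/n_k}\to 0$ \emph{uniformly}, and this contradicts the lower bound $\|q_{n_k}\|_{A_0}^{1/n_k}\ge\tau\big(A_0,U(0,r)\big)>0$ furnished by Sadullaev's $\tau$-capacity (equivalently, the Bernstein--Walsh inequality relative to $A_0$, using $\|q_{n_k}\|_\Delta=1$). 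That capacity argument is the correct replacement for your pointwise Bernstein step; it controls the erratic denominators globally without ever pinning down a single $x_0$.
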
\begin{proof}
By the hypothesis of the theorem there exists a sequence of natural numbers $n_k$  and a corresponding sequence of rational functions 
$$
r_{n_k}=\frac {p_{n_k}}{q_{n_k}}
$$
such that
$$
\rho_{n_k}(f)=\Big\Arrowvert f-r_{n_k}\Big\Arrowvert_{\Delta}\leq\alpha^{n_k},
$$ 
where $\alpha\in (0,1)$ is some fixed number. Without loss of generality we can assume that 
$$
\big\Arrowvert f\big\Arrowvert_\Delta\leq\frac {1}{2},\qquad\big\Arrowvert p_{n_k}\big\Arrowvert_\Delta\leq 1\qquad\text{and}\qquad \big\Arrowvert q_{n_k}\big\Arrowvert_\Delta=1.
$$
According to Bernstein-Walsh inequality (see, \cite{S2}) we have
$$
\big|p_{n_k}(z)\big|\leq e^{n_kV^\star(z,\Delta)}\qquad\text{and}\qquad \big|q_{n_k}(z)\big|\leq e^{n_kV^\star(z,\Delta)}
$$   
for any $z\in\mathbb C$ and $k\in N$. Here  
$$
V^\star(z,\Delta)={\overline\lim}_{z'\to z}\sup\{\frac {1}{n}\ln\big|p_n(z')\big|\colon\big\Arrowvert p_n\big\Arrowvert_\Delta\leq 1\}
$$
is the extremal function of Green. 

We introduce the following auxiliary sequence of plurisubharmonic functions
$$
u_k(z,w)=\frac {1}{n_k}\ln\big|q_{n_k}(z)\cdot w-p_{n_k}(z)\big|,\qquad (z,w)\in\mathbb C^2.
$$
For $(z,w)\in\mathbb C^2$ we have 
\begin{equation*}
\begin{split}
& \frac {1}{n_k}\ln\big|q_{n_k}(z)\cdot w-p_{n_k}(z)\big|\leq \frac {1}{n_k}\ln\Big(\big|q_{n_k}(z)\cdot w\big|+\big|p_{n_k}(z)\big|\Big)\\
& \leq\max\{\frac {1}{n_k}\ln 2\big|p_k(z)\big|,\,\,\frac {1}{n_k}\ln 2\big|q_{n_k}(z)\cdot w\big|\}\\
& =\max\{\frac {1}{n_k}\ln \big|p_k(z)\big|,\,\,\frac {1}{n_k}\ln \big|q_{n_k}(z)\big|+\frac {1}{n_k}\ln\big|w\big|\}+\frac {\ln 2}{n_k}.
\end{split}
\end{equation*}
From here we obtain the following estimate
$$
u_k(z,w)\leq \max\{V^\star(z,\Delta),\,\, V^\star(z,\Delta)+\frac {1}{n_k}\ln |w|\}+\frac {\ln 2}{n_k}.
$$
Consequently, the sequence of plurisubharmonic functions $u_k(z,w)$ is locally uniformly bounded from above.  Let 
 $$
 u(z,w)=\overline{\lim_{k\to\infty}} u_k(z,w).
 $$
The function $u(z,w)$ is also locally bounded from above,  
$$
u(z,w)\leq V^\star(z,\Delta).
$$
Let
$$
u^\star(z,w)=\underset {(z',w')\to (z,w)}{\overline\lim} u(z',w')
$$
denote the regularization of function $u(z,w)$.  The set 
$$
E=\{(z,w)\in\mathbb C^2\colon u(z,w)<u^\star(z,w)\}
$$
is pluripolar in $\mathbb C^2$ (see \cite{S1},\cite{S2}).

Let now $(z,w)\in\Gamma_f$ be a fixed point. (Note that $q_n(z)\neq 0$ for $z\in\Delta$). Then 
\begin{equation*}
\begin{split}
u(z,w)=\underset{k\to\infty}{\overline\lim}\ln\big|q_{n_k}(z)\big|^{\frac {1}{n_k}}\Big|w-\frac {p_{n_k}(z)}{q_{n_k}(z)}\Big|^{\frac {1}{n_k}} & \leq\underset{k\to\infty}{\overline\lim}\ln\alpha\big|q_{n_k}(z)\big|^{\frac {1}{n_k}}\\
& =\ln\alpha+\underset{k\to\infty}{\overline\lim}\ln\big|q_{n_k}(z)\big|^{\frac {1}{n_k}}.
\end{split}
\end{equation*}
 If  $(z,w)\in (\Delta\times\mathbb C)\setminus\Gamma_f$, then  
\begin{equation*}
\begin{split}
u(z,w) &=\underset{k\to\infty}{\overline\lim}\frac {1}{n_k}\ln\big|q_{n_k}(z)w-p_{n_k}(z)\big|\\ 
& = \underset{k\to\infty}{\overline\lim}\ln\big|q_{n_k}(z)\big|^{\frac {1}{n_k}}\Big|w-\frac {p_{n_k}(z)}{q_{n_k}(z)}\Big|^{\frac {1}{n_k}}=
\underset{k\to\infty}{\overline\lim}\ln\big|q_{n_k}(z)\big|^{\frac {1}{n_k}}.
\end{split}
\end{equation*}
It follows that if  
$$
\underset{k\to\infty}{\overline\lim}\big|q_{n_k}(z)\big|^{\frac {1}{n_k}}\neq 0
$$
at a point $z\in\Delta$, then $(z,f(z))$  belongs to the pluripolar set  $E$. Therefore, to complete the proof of the theorem it is enough to show that the set
$$
A=\Big\{z\in\Delta\colon\underset{k\to\infty}{\overline\lim}\big|q_{n_k}(z)\big|^{\frac {1}{n_k}}=0\Big\}=
\Big\{z\in\Delta\colon\underset{k\to\infty}{\lim}\big|q_{n_k}(z)\big|^{\frac {1}{n_k}}=0\Big\}
$$
is polar. Assume that $A$ is not polar, i.e., it has a positive logarithmic capacity, $\operatorname{cap}(A)>0$.  

We consider the following sequence of subharmonic functions 
$$
\vartheta^\star_k(z)=\underset{z'\to z}{\overline\lim}\vartheta_k(z'),\qquad z\in\mathbb C,
$$ 
where  
$$
\vartheta_k(z)=\sup_{m\geq k}\big|q_{n_m}(z)\big|^{\frac {1}{n_m}}.
$$
It is clear that the sequence $\vartheta^\star_k(z)$ is  locally uniformly bounded, $0\leq\vartheta^\star_k(z)\leq e^{V^\star(z,\Delta)}$ and is not monotonically increasing. In addition, $\vartheta^\star_k(z)\to 0$  on $A$, except for the polar set 
$$
F=\bigcup_{k=1}^{\infty}\{z\in\mathbb C\colon\,\vartheta_k(z)<\vartheta^\star_k(z)\}
$$
since by definition the sequence  $\vartheta_k(z)$ tends to zero on $A$.

Since $\vartheta^\star_k(z)$ is monotonic, for every $\epsilon$, $0<\epsilon<\operatorname{cap}(A)$ there exists an open set  $U_\epsilon$, $\operatorname{cap}(U_\epsilon)<\epsilon$ such that the sequence $\vartheta^\star_k(z)$ converges uniformly on the set $A_\epsilon=A\setminus U_\epsilon$ (see, for example, \cite{S1}). It follows that there exists a compact set $A_0\subset A_\epsilon$, $\operatorname{cap}(A_0)>0$,   such that the sequence of subharmonic functions $\vartheta^\star_k(z)$ converges uniformly to zero on the set $A_0$.  Consequently,  the sequence $\big|q_{n_k}(z)\big|^{1/{n_k}}$   also converges uniformly to zero on the  compact set $A_0$.
Now we use the so-called  $\tau$-capacity, introduced by A.Sadullaev (see \cite{S1}). We consider a polynomial  $T_n(z)$,  $\operatorname{deg}T_n(z)\leq n$, normalized by condition  $\Arrowvert T_n\Arrowvert_{U(0,r)}=1$, where  $U(0,r)=\{z\colon |z|<r\}\supset\Delta$ for which the norm $\Arrowvert T_n\Arrowvert_{A_0}$ is minimal among all such polynomials. Then the limit 
$$
\lim_{n\to\infty}\Arrowvert T_n\Arrowvert_{A_0}^{\frac {1}{n}}=\tau\big(A_0,U(0,r)\big),
$$
exists and, moreover
$$
\Arrowvert T_n\Arrowvert_{A_0}^{\frac {1}{n}}\geq\tau\big(A_0,U(0,r)\big)
$$
for every  $n\in N$. In fact, 
$$
\tau\big(A_0,U(0,r)\big)=\exp\{-\max_{z\in U(0,r)}V^\star(z,A_0)\}.
$$

It follows that
\begin{equation}\label{one}
\big\Arrowvert q_{n_k}\big\Arrowvert_{A_0}^{\frac {1}{n_k}}\geq\big\Arrowvert T_{n_k}\big\Arrowvert_{A_0}^{\frac {1}{n_k}}\geq\tau\big(A_0,U(0,r)\big)>0
\end{equation}
for 
$$
\Arrowvert q_{n_k}\Arrowvert_\Delta=1\qquad\text{and}\qquad\Arrowvert q_{n_k}\Arrowvert_{U(0,r)}\geq 1.
$$
It follows from inequality (\ref{one}) that the sequence $|q_{n_k}(z)|^{\frac {1}{n_k}}$  does not converge uniformly to zero on  the set $A_0$.  We arrived at a contradiction. Hence the set $A$  is polar.

Thus, the graph $\Gamma_f$ is divided into two parts, $\Gamma_f=\Gamma'_f\cup\Gamma''_f$, where
$$
\Gamma'_f=\{(z,f(z))\colon\, z\in\Delta\setminus A\}\qquad\text{and}\qquad\Gamma''_f=\{(z,f(z))\colon\, z\in A\}.
$$
The first set is a subset of a pluripolar set $E\subset\mathbb C^2$ and the second set is a subset of a pluripolar set  $A\times\mathbb C\subset\mathbb C^2$. Consequently, $\Gamma_f$  is a pluripolar in $\mathbb C^2$.   The proof of the theorem is complete.

\end{proof}

\noindent{\bf{Sadullaev's additions.}}  Actually, in this work the authors proved a stronger result. Let $K$ be arbitrary compact set in $\mathbb C^n$ and let $f(z)\in C(K)$. We say that $f(z)$  belongs to the class of Gonchar $R(K)$  if
$$
\underset {k\to\infty}{\underline{\lim}}{\rho_k}^{1/k}(f,K)<1,\qquad\text{where}\qquad\rho_k(f,K)=\inf_{\{r_k\}}\max_{z\in K}|f(z)-r_k(z)|
$$
and the  infimum is taken over all rational functions $r_k(z)$ of degree less than or equal to $k$. The following more general result holds:
if the function $f(z)$ belongs to the class of Gonchar $R(K)$, then its graph $\Gamma_f=\{(z,f(z))\in\mathbb C^{n+1}\colon z\in K\}$   is pluripolar in $\mathbb C^{n+1}$.
 
  
We note that pluripolarity of a graph $\Gamma_f$ is closely related to analyticity property of $f(z)$ or to properties of $f(z)$ related to analyticity.  Analyticity of continuous functions in domains $D\subset \mathbb C^n$ whose graphs are pluripolar in $\mathbb C^{n+1}$ was proved in (\cite{Shc}). An example of a function $f(z)$, which is holomorphic in the unit disk $U\subset\mathbb C$ and continuous in the closed disk $\overline U$, and whose graph $\Gamma_f$,
$$
\Gamma_f=\{(z,f(z))\in\mathbb C^2\colon\, z\in\overline U\},
$$
is not pluripolar is constructed in (\cite{LMP}).  These results suggest that the main reason behind pluripolarity of graphs is the analyticity of functions. 

In (\cite{S3}) an example of a lacunary series
$$
f(z)=\sum_{k=1}^{\infty} a_{n_k}z^{n_k}\quad (\underset{k\to\infty}{\overline\lim}\sqrt[n_k]{a_{n_k}}=1,\quad\frac {a_{n_k}}{a_{n_k+1}}\to 0\quad\text{as}\quad k\to\infty)
$$
whose graph $\Gamma_f$ is plurispased in every boundary point. It would be interesting to establish if the graph of a function $f(z)$,
$$
f(z)=\sum_{k=1}^{\infty}\frac {1}{k^{\ln k}}z^{k!},
$$
which is infinitely smooth up to the closure of its domain, is pluripolar in $\mathbb C^2$?
If we consider the series 
$$
f(z)=\sum_{k=1}^{\infty}\frac {1}{2^{(k-1)!}}z^{k!},
$$
then its graph $\Gamma_f$  will be pluripolar since $f(z)$ is quasianalytic in the sense of Gonchar.

\end{document}